\definecolor{mygreen}{rgb}{0.0, 0.5, 0.0} 
\newtheorem{thm}{Theorem} 
\newtheorem{lem}[thm]{Lemma}
\newtheorem{prop}{Proposition}
\newtheorem{defn}[thm]{Definition}
\theoremstyle{remark}
\newcommand{\defi}{:=}
\newcommand{\F}{{\mathbb F}}             
\newcommand{\Z}{{\mathbb Z}}
\newcommand{\tor}{\mathop\mathrm{tor}}
\newcommand{\id}{\mathop{\rm id}\nolimits}
\newcommand{\msk}{\medskip}
\newcommand{\ignore}[1]{} 
\definecolor{mygreen}{rgb}{0.0, 0.5, 0.0} 
\long\def\alert#1{\parindent2em\smallskip\hbox to\hsize%
{\hskip\parindent\vrule%
\vbox{\advance\hsize-2\parindent\hrule\smallskip\parindent.4\parindent%
\narrower\noindent#1\smallskip\hrule}\vrule\hfill}\smallskip\parindent0pt}
\newcounter{alert}\setcounter{alert}0
\newcommand{\lc}{lo\-cal\-ly com\-pact}
\newcommand{\lca}{\lc\ a\-bel\-ian}
\newcommand{\lead}{\leaders\hbox to 1.5ex{\hss${.}$\hss}\hfill}
\newcommand{\arr}{\hbox to 20pt{\rightarrowfill}}
\newcommand{\larr}{\hbox to 20pt{\leftarrowfill}}
\newcommand{\refeq}[1]{Eq.\,(\ref{eq:#1})}
\newtheorem{exam}[thm]{Example}
\begin{document}
\title[A Short Note on Coproducts of Abelian pro-Lie Groups]{A Short Note on Coproducts of Abelian pro-Lie Groups}
\author[W. Herfort]{Wolfgang Herfort}
\address{\newline  
Institute for Analysis and Scientific Computation  \newline
Technische Universit\"{a}t Wien\newline 
Wiedner Hauptstra\ss e 8-10/101\newline                    
Vienna, Austria} 
\email{wolfgang.herfort@tuwien.ac.at}

\author[K. H. Hofmann]{Karl H. Hofmann}
\address{\newline 
Fachbereich Mathematik\newline 
Technische Universit\"at Darmstadt\newline 
Schlossgartenstr. 7\newline e
64289 Darmstadt, Germany} 
\email{hofmann@mathematik.tu-darmstadt.de}

\author[F. G. Russo]{Francesco G. Russo}
\address{\newline
Department of Mathematics and Applied Mathematics \newline 
University of Cape Town\newline
Private Bag X1, Rondebosch 7701\newline
Cape Town, South Africa}
\email{francescog.russo@yahoo.com}
\keywords{Pro-Lie groups, coproducts\endgraf
\textit{Mathematics Subject Classification 2020:} 22E20; 22A05}
\date{\today}
\begin{abstract}
The notion of {\em conditional coproduct} of a family of abelian pro-Lie groups
 in the category of abelian pro-Lie groups is introduced. 
It is shown that the Cartesian product of an arbitrary family of 
abelian pro-Lie groups can be characterized by the universal property of the
{\em conditional coproduct}.
\end{abstract}
\maketitle  
In \cite{hofmor77} the second author and S.~Morris have provided
criteria when the (co)product of a family of  \lca\ groups exists.
For profinite abelian groups the product of any family
$(A_i)_{i\in I}$
of profinite
groups exists and agrees with the cartesian product
$P:=\prod_{i\in I}A_i$.
J.~Neukirch  has shown in \cite{Neukirch71} that $P$
has a universal property resembling that of 
 a coproduct (direct sum) in the category of (discrete) abelian groups.
In the present note we present a version of his result, valid for
cartesian products of 
the much larger
family of {\em abelian pro-Lie groups} (see \cite[Ch. 5]{hofmor-conn}).
For formulating our result, we 
need to adapt the concepts, originally introduced
for families of profinite groups by
J. Neukirch in \cite{Neukirch71} (see also \cite[D.3]{ribes-zalesskii}),
to the category of abelian pro-Lie groups.

\begin{defn}\label{d:convergent}\rm
Let $(A_j)_{\in J}$ be a family of topological
groups, $H$ a topological group, and
$\mathbb F=(\phi_j)_{j\in J}$,  $\phi_j\colon A_j\to H$,
a family of continuous homomorphisms. We say that
$\mathbb F$ is {\em convergent}, if for every identity neighborhood $U$
of $H$ the set $J_U\defi \{j\in J: \phi_j(A_j)\not\subseteq U\}$
is finite.
\end{defn}

\begin{exam}\label{ex:P-convergent}\rm 
For any family $(A_j)_{j\in J}$ of
topological groups let $H=\prod_{j\in J}A_j$ be the
cartesian product with the Tychonov topology.
Let the family $\F=(\tau_j)_{j\in J}$ of natural morphisms
$\tau_j\colon A_j\to H$ be given by
\[\tau_j(a)=(a_k)_{k\in J}\mbox{ with }a_j=a\mbox{ and }a_k=0\mbox{  otherwise.}\]
Then $\F$ {\em is convergent}.

\medskip

\noindent
This follows immediately from the definition of the product topology
on $H$. The morphisms $\tau_j$ are called the {\em natural embeddings}.

\end{exam}

We define the {\em conditional coproduct} by means of
a universal property,
resembling the one of the coproduct (direct sum) of abelian discrete groups:

\begin{defn}\label{d:condprod}\rm 
In a category ${\mathcal A}$ of topological groups we call
$G$ a {\em conditional coproduct} of the family $(A_j)_{j\in J}$
of objects if there is a convergent family
$\tau_j\colon A_j\to G$, $j\in J$ of morphisms such that for every
convergent family of morphisms $\psi_j\colon A_j\to H$, $j\in J$
in $\mathcal A$ there is a unique morphism $\omega\colon G\to H$
such that $\psi_j=\omega\circ\tau_j$ for all $j\in J$.
The morphisms $\tau_j$  are called the {\em coprojections}
of the conditional coproduct.
\end{defn}

We shall prove the following Theorem:
  
\begin{thm}\label{t:tychonov}
In the category of  abelian pro-Lie groups, the
conditional coproduct of a family $(A_j)_{j\in J}$ of abelian
pro-Lie groups is the cartesian product
$P\defi\prod_{j\in J} A_j$
for
the canonical embeddings $\tau_j\colon A_j\to P$.
\end{thm}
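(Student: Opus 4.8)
The plan is to verify directly the universal property of Definition~\ref{d:condprod}. The required convergence of the family of canonical embeddings $(\tau_j)_{j\in J}$ is exactly the content of Example~\ref{ex:P-convergent}, so the whole matter reduces to this: given any convergent family $\F=(\psi_j)_{j\in J}$ of morphisms $\psi_j\colon A_j\to H$ into an abelian pro-Lie group $H$, I must produce a \emph{unique} morphism $\omega\colon P\to H$ with $\omega\circ\tau_j=\psi_j$ for all $j$.

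The key device is the structure of $H$ as a projective limit $H=\varprojlim_{N} H/N$ of abelian Lie groups, the limit being taken over the filter basis $\mathcal N$ of closed subgroups $N$ for which $H/N$ is a Lie group; write $q_N\colon H\to H/N$ for the limit morphisms. The crucial observation is that a Lie group has \emph{no small subgroups}: each $H/N$ possesses an identity neighborhood $W_N$ containing no nontrivial subgroup. Applying the convergence of $\F$ to the identity neighborhood $U\defi q_N^{-1}(W_N)$ of $H$, all but finitely many $j$ satisfy $\psi_j(A_j)\subseteq U$, whence $q_N(\psi_j(A_j))$ is a subgroup of $H/N$ contained in $W_N$ and therefore trivial. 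Thus for each $N$ the index set $J_N\defi\{\,j: q_N\circ\psi_j\neq 0\,\}$ is finite.

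This finiteness lets me define, for every $N\in\mathcal N$, a continuous homomorphism $\omega_N\colon P\to H/N$ by the \emph{finite} sum $\omega_N\big((a_j)_{j\in J}\big)\defi\sum_{j\in J_N} q_N\big(\psi_j(a_j)\big)$; each summand is continuous because it factors through a coordinate projection $P\to A_j$, so $\omega_N$ is continuous, and a short check shows the $\omega_N$ are compatible with the bonding maps $H/N\to H/N'$ for $N\subseteq N'$. By the universal property of the projective limit these assemble into a unique continuous homomorphism $\omega\colon P\to H$ with $q_N\circ\omega=\omega_N$. Testing against a coprojection gives $q_N\big(\omega(\tau_j(a))\big)=\omega_N(\tau_j(a))=q_N(\psi_j(a))$ for every $N$, and since the $q_N$ jointly separate the points of the Hausdorff group $H$ we conclude $\omega\circ\tau_j=\psi_j$.

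For uniqueness it suffices to note that the finite-support elements $\bigoplus_{j\in J}A_j$—which are precisely the finite sums $\sum_{j\in F}\tau_j(a_j)$—form a dense subgroup of $P$ in the Tychonov topology. Any morphism $\omega'$ satisfying $\omega'\circ\tau_j=\psi_j$ must agree with $\omega$ on each such finite sum by additivity, hence on a dense set, hence everywhere by continuity together with the Hausdorff property of $H$. The main obstacle is the second paragraph: everything hinges on converting the convergence condition on $\F$ into the statement that each Lie-group quotient sees only finitely many nonzero $\psi_j$, which is exactly where the no-small-subgroups property of Lie groups—and thus the hypothesis that we work with pro-\emph{Lie} rather than arbitrary topological groups—is indispensable.
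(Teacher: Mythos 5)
Your proof is correct and follows essentially the same route as the paper: both arguments convert the convergence of $(\psi_j)_{j\in J}$ into the finiteness of the sets $J_N$ via the no-small-subgroups property of the Lie quotients $H/N$ (this is exactly the paper's Lemma~\ref{l:JN-finite}, which you re-prove inline), then define the finite-sum morphisms $\omega_N\colon P\to H/N$, check compatibility, and assemble them through the universal property of $H\cong\varprojlim_{N}H/N$. The one point where you are more explicit than the paper is uniqueness: the paper delegates it to the uniqueness clause of the inverse-limit property (which, strictly, gives uniqueness only among morphisms compatible with the family $(\omega_N)$), whereas your density argument on the finite-support subgroup of $P$ yields uniqueness among all morphisms $\omega'$ satisfying $\omega'\circ\tau_j=\psi_j$, a small but genuine tightening of the written proof.
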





But first we secure the uniqueness of the
conditional coproduct:

\begin{prop}\label{p:condprodunique} If $G$ and $G'$ are conditional
coproducts of a family $(A_j)_{j\in J}$
of topological groups in a category $\mathcal A$
for the convergent families  $\tau_j\colon A_j\to G$
and $\tau'_j\colon A_j\to G'$, $j\in J$ of morphisms in
$\mathcal A$, then there is a natural isomorphism
$\lambda\colon G\to G'$ such that
$\tau'_j=\lambda\circ \tau_j$ for all $j\in J$.
\end{prop}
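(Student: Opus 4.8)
The plan is to prove this by the standard categorical uniqueness argument for objects defined by a universal property, which is entirely formal and does not use any topological content beyond what is packaged into the definition of conditional coproduct. The strategy is to produce the isomorphism $\lambda$ from the universal properties of $G$ and $G'$ applied to each other, and then to verify both that $\lambda$ is a mutually inverse pair and that it is compatible with the coprojections.

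First I would apply the universal property of $G$ to the convergent family $\tau'_j\colon A_j\to G'$: since $G$ is a conditional coproduct with coprojections $\tau_j$, and $(\tau'_j)_{j\in J}$ is a convergent family of morphisms into the object $G'$, there is a unique morphism $\lambda\colon G\to G'$ with $\tau'_j=\lambda\circ\tau_j$ for all $j\in J$. Symmetrically, applying the universal property of $G'$ to the convergent family $\tau_j\colon A_j\to G$ yields a unique morphism $\mu\colon G'\to G$ with $\tau_j=\mu\circ\tau'_j$ for all $j\in J$. The map $\lambda$ is already the natural morphism the proposition asks for, satisfying $\tau'_j=\lambda\circ\tau_j$; it remains to see that it is an isomorphism.

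To show $\lambda$ is invertible, I would check that $\mu\circ\lambda=\id_G$ and $\lambda\circ\mu=\id_{G'}$. Consider $\mu\circ\lambda\colon G\to G$. Combining the two relations above gives $(\mu\circ\lambda)\circ\tau_j=\mu\circ(\lambda\circ\tau_j)=\mu\circ\tau'_j=\tau_j$ for every $j\in J$. Thus $\mu\circ\lambda$ is a morphism $G\to G$ factoring the convergent family $(\tau_j)_{j\in J}$ through the coprojections of $G$. But the identity $\id_G$ also satisfies $\id_G\circ\tau_j=\tau_j$, and the uniqueness clause of Definition \ref{d:condprod} (applied to the convergent family $\tau_j\colon A_j\to G$ itself) forces any such morphism to be unique; hence $\mu\circ\lambda=\id_G$. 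The symmetric computation with the roles of $G$ and $G'$ interchanged gives $\lambda\circ\mu=\id_{G'}$, so $\lambda$ is a natural isomorphism with inverse $\mu$.

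The only subtlety, and the step I would flag, is making sure the uniqueness clause is being invoked legitimately at each stage, which requires that the relevant families really are convergent so that the universal property applies. This is immediate here: $(\tau'_j)$ and $(\tau_j)$ are assumed convergent as the coprojections of conditional coproducts, and the trivial factorization $\id_G\circ\tau_j=\tau_j$ reuses the convergent family $(\tau_j)$ already in hand, so no new convergence needs to be established. Consequently the argument is purely diagrammatic and carries over verbatim to any category $\mathcal A$ of topological groups; the abelian pro-Lie hypothesis of Theorem \ref{t:tychonov} plays no role in the uniqueness statement.
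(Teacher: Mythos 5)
Your proof is correct and follows essentially the same route as the paper's own argument: obtain $\lambda$ and its candidate inverse from the two universal properties, compose, and use the uniqueness clause against $\id_G$ (resp.\ $\id_{G'}$) to conclude both composites are identities. Your added remark that the uniqueness clause applies because the coprojection families are themselves convergent is a point the paper leaves implicit, but it introduces no divergence in method.
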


\begin{proof} By Definition 2,
  since $G$ is a conditional coproduct of the family $(A_j)_{j\in J}$
  with  the coprojections $\tau_j$, $j\in J$, there is a unique morphism
  $\lambda \colon G\to G'$ such that
  \begin{equation}\label{eq:1}
\forall j\in J)\, \lambda\circ \tau_j =\tau'_j\colon A_j\to G'.
  \end{equation}
  Likewise, since $G'$ is also a conditional coproduct
  of the family $(A_j)_{j\in J}$ with the coprojections $\tau'_j$,
  $j\in J$, there is a unique morphism $\lambda'\colon G'\to G$
  such that
  \begin{equation}\label{eq:2}
(\forall j\in J)\, \lambda'\circ \tau'_j= \tau_j\colon A_j\to G.
  \end{equation}
  Therefore, by (\ref{eq:1}) and (\ref{eq:2}),  we have
  \begin{equation}\label{eq:3}(\forall j\in J)\, \tau_j=\lambda'\circ \tau'_j
  =\lambda'\circ \lambda \circ \tau_j\colon G\to G. \end{equation}
  However, trivially we also have,
  \begin{equation}\label{eq:4}
(\forall j\in J)\, \tau_j=\id_G\circ \tau_j\colon G\to G.
\end{equation}
  Therefore, by the uniqueness in Definition \ref{d:condprod}, 
from (\ref{eq:3})  and (\ref{eq:4}) we have 
\begin{equation}\label{eq:5}
\lambda'\circ \lambda=\id_G.
\end{equation}
  Now by exchanging the roles
  of $G$ and $G'$ we also have
\begin{equation}\label{eq:6}
\lambda\circ\lambda'=\id_{G'}.
\end{equation}
  Hence by (\ref{eq:5}) and (\ref{eq:6}), $\lambda$ is an isomorphism,
  which we had to show.
   \end{proof}

We note that for profinite groups the {\em conditional coproduct}
agrees with the {\em free pro-$\mathcal C$ product} for $\mathcal C$
the variety of abelian profinite groups, see \cite{Neukirch71,ribes-zalesskii}.

\msk

Let $\mathcal A$ be the category of topological abelian pro-Lie groups
(i.e. groups which are projective limits of Lie groups: see
\cite[pp. 160ff. and Chapter 5]{hofmor-conn}).
Each pro-Lie group $G$ has a filterbasis
${\mathcal N}(G)$ of closed normal subgroups such that
$G/N$ is a Lie group, and $G\cong\lim_{N\in{\mathcal N}(G)}G/N$.
(See e.g. \cite[p.160, Definition A.]{hofmor-conn})

Recall that
every locally compact abelian group is a pro-Lie group, every
almost connected locally compact group is a pro-Lie group 
by Yamabe's Theorem.  Trivially, then, every profinite group is a pro-Lie group.
Every cartesian product $P=\prod_{j\in J}A_j$ of pro-Lie groups
 $A_j$ is itself a pro-Lie group.

\begin{lem} \label{l:JN-finite}
Let $H$ be an abelian pro-Lie group and  $\F$ be  a convergent family
of morphisms $\psi_j:A_j\to H$. 
Then, for each $N\in {\mathcal N}(H)$, 
the set $\{j\in J:\psi_j(A_j)\not\subseteq N\}$ is finite.
\end{lem}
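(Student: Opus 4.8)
The plan is to convert the statement, which is phrased in terms of the closed normal subgroup $N$, into the neighborhood language of Definition~\ref{d:convergent}. The difficulty is that $N$ itself is typically not an identity neighborhood of $H$ (for instance $N=\{e\}$ is allowed whenever $H$ is already a Lie group), so one cannot simply apply convergence to $N$. Instead I would produce a single identity neighborhood $U$ of $H$ with the property that any image subgroup $\psi_j(A_j)$ contained in $U$ is already contained in $N$; then the index set in question will be contained in the finite set $J_U$. The bridge that makes this possible is the \emph{no small subgroups} property of Lie groups.

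First I would pass to the quotient. Let $q\colon H\to H/N$ be the canonical projection; since $N\in\mathcal N(H)$, the quotient $H/N$ is a Lie group. Every Lie group has no small subgroups, so there is an open identity neighborhood $V$ of $H/N$ whose only subgroup is the trivial one. (For abelian Lie groups this is elementary: $H/N$ is locally Euclidean, and a sufficiently small ball about the identity contains no nontrivial subgroup, as is visible from the structure $H/N\cong\R^n\times\T^m\times D$ with $D$ discrete.) Set $U\defi q^{-1}(V)$; since $q$ is continuous and $V$ is an open neighborhood of the identity $eN$, the set $U$ is an identity neighborhood of $H$.

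The key step is then the inclusion $\{j\in J:\psi_j(A_j)\not\subseteq N\}\subseteq J_U$, which I would prove by contraposition. Suppose $\psi_j(A_j)\subseteq U=q^{-1}(V)$. Then $q\circ\psi_j\colon A_j\to H/N$ is a continuous homomorphism, so its image $q(\psi_j(A_j))$ is a subgroup of $H/N$ contained in $V$. By the choice of $V$ this subgroup must be trivial, whence $\psi_j(A_j)\subseteq\ker q=N$. Contrapositively, $\psi_j(A_j)\not\subseteq N$ forces $\psi_j(A_j)\not\subseteq U$, i.e. $j\in J_U$, as claimed.

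Finally, since $\F$ is convergent and $U$ is an identity neighborhood of $H$, the set $J_U$ is finite by Definition~\ref{d:convergent}; hence the smaller set $\{j\in J:\psi_j(A_j)\not\subseteq N\}$ is finite as well, completing the argument. I expect the only genuine obstacle to be the very first move: recognizing that one should invoke the absence of small subgroups in the Lie quotient $H/N$ and \emph{pull back} such a neighborhood through $q$, rather than attempting to fit an identity neighborhood inside $N$ directly, which is in general impossible because $N$ need not be open.
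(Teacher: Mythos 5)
Your proof is correct and follows essentially the same route as the paper: both exploit the no-small-subgroups property of the Lie group $H/N$ to choose a neighborhood $V$ containing only the trivial subgroup, pull it back to $U=q^{-1}(V)$ in $H$, deduce that $\psi_j(A_j)\not\subseteq N$ forces $\psi_j(A_j)\not\subseteq U$, and then invoke convergence of $\F$. Your write-up is in fact slightly more careful than the paper's, since you spell out the contraposition step that the paper compresses into a single ``Therefore''.
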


\begin{proof} Let $N\in {\mathcal N}(H)$.
The Lie group $H/N$ has an identity neighborhood
$V$ in which $\{0\}$ is the only subgroup of $H/N$. Now
let $p\colon H\to H/N$ be the quotient morphism and set
$U=p^{-1}(V)$.  

Therefore $\psi_j(A_j)\not\subseteq N$ implies
$\psi_j(A_j)\not\subseteq U$. However the set of
$j$ satisfying this condition is finite by Definition \ref{d:convergent}
applying to the conditional coproduct of the family
$(A_j)_{j\in J}$. This completes the proof of the Lemma.
\end{proof}

\begin{proof}[Proof of Theorem \ref{t:tychonov}]
The uniqueness, up to isomorphism, of the conditional coproduct
follows from Proposition \ref{p:condprodunique}.

Thus, according to Definition \ref{d:condprod}, we need to show 
that given an abelian pro-Lie group $H$ and a convergent family of morphisms
$\psi_j:A_j\to H$ then there exists a unique morphism 
$\omega:P\to H$ with $\psi_j=\omega\circ\tau_j$
for all $j\in J$.

\msk

We note first that every $x\in P$ has a presentation 
\begin{equation}\label{eq:xinP}
x=(\tau_j(a_j))_{j\in J}\end{equation} 
for unique elements $a_j\in A_j$. 
Denote by $\mathcal N(H)$ the set of all closed subgroups of $H$ such
that $H/N$ is a Lie group. It is a consequence 
of \cite[Theorem 3.27]{hofmor-conn} that $\mathcal N(H)$ 
is a filter basis of closed subgroups of $H$ and
that  
\begin{equation}\label{eq:H-lim}
H\cong \varprojlim_{N\in\mathcal N}H/N
\end{equation}
algebraically and topologically. 

Fix
$N\in \mathcal N(H)$ and let let $J_N\defi\{j\in J:\psi_j(A_j)\not\le N\}$.
Then, by Lemma \ref{l:JN-finite}, the set  $J_N$ is finite and, 
taking the presentation \refeq{xinP} for
$x\in P$ and $\tau_j(A_j)\le N$ for all $j\notin J_N$
into account, we obtain a well-defined morphism 
$\omega_N:P\to H/N$ by letting
\begin{equation}\label{eq:omegaN}
\omega_N(x)\defi \sum_{j\in J_N}\psi_j(a_j)+N.\end{equation}
For subgroups  $M\le N$ of $H$, both in $\mathcal N(H)$,
 let $\pi_{NM}:H/M\to H/N$ denote
the canonical epimorphism.  
 
For $M\le N$ one obtains from \refeq{omegaN} the compatibility relation  
\begin{equation}\label{eq:omegaMN}
\omega_N=\pi_{NM}\circ\omega_M, \end{equation}
as depicted in the following diagram:
\[ \xymatrix{&&&&P\ar@{->}[dlll]_{\omega_N}\ar@{->}[dl]^{\omega_M}\ar@{->}[d]^{\exists!\,\omega}\\
      \cdots  & H/N\ar@{<-}[rr]_{\pi_{NM}} && H/M \ar@{<-}[r] \ldots  & H=\varprojlim_{N\in\mathcal U}H/N
} 
\]
Taking the relations in \refeq{omegaMN} into account we see that
the universal property of the inverse limit 
$H=\varprojlim_{N\in\mathcal U}H/N$ 
implies the existence of a unique continuous 
homomorphism $\omega:P\to H$, which
satisfies the desired relations 
\begin{equation}\label{eq:omegapsi}
(\forall j\in J)\ \ \ \psi_j=\omega\circ\tau_j.
\end{equation}
\vskip-25pt
\end{proof}

\bigskip

\noindent{\bf Notes.}

\medskip

\noindent A {\em coproduct} of a family
of objects in a
category $\mathcal A$ is a {\em product} in  the category obtained
by reversing all arrows. Curiously, while products are
usually considered  simple concepts, coproducts are often
tricky in many categories $\mathcal A$ other than the category
of abelian groups. Therefore, in conclusion of this note,
a few general comments may be in order. 

One of the early surprises is that in the familiar
{\em category of groups}, the coproduct
of $\Z(2)$ and $\Z(3)$ is PSL$(2,\Z)$.

In any category $\mathcal A$ with
a well-introduced dual category, such as {\em the category of
locally compact abelian groups}, the coproduct
$\coprod_j A_j$ of a family $A_j$, $j\in J$,
is naturally isomorphic to
the dual $\widehat P$ of $P:=\prod_j \widehat{A_j}$,
the product of its duals.

Even in special cases,
such as the case of {\em compact} abelian groups $A_j$,
the result is a complicated coproduct, since the character
group of an infinite  product of discrete abelian groups may
be hard to deal with.

If $\mathcal A$ is {\em the category of profinite
abelian groups}, then its dual is the category $\mathcal T$ of abelian
torsion groups. The product in $\mathcal T$ of a family of torsion
groups $T_j$ is the torsion group $\tor(\prod_j T_j)$ of the
cartesian product.
So by the time we arrive at the coproduct of, say,  an 
unbounded family of cyclic groups $A_j$ in $\mathcal A$, we may have
a complicated object $\coprod_{j\in J} A_j$ in our hands.

Therefore, any special situation  may be welcome,
where a 
coproduct is lucid--even when its scope of application may be
restricted. An example
of such a situation is our present {\em conditional coproduct} in
the rather large yet reasonably well-understood category of
abelian pro-Lie groups (see Chapter 5 of \cite{hofmor-conn}).
The authors encountered such a coproduct  in a study of
certain locally compact abelian
$p$-groups. 
Our conditional coproduct covers a somewhat 
restricted supply of families of morphisms which we call
``convergent''. Here we encounter  the rather
extraordinary event that for each of such  families {\em their
conditional coproduct agrees with their cartesian product}.
Classically, one is familiar with  a situation of coproducts agreeing
with products in the category of finite abelian groups which, after all,
is rather special.

\bibliographystyle{alpha}
\def\cprime{$'$}
\def\cydot{\leavevmode\raise.4ex\hbox{.}}
\bibliography{../BOOK_ERRATA/BOOK}
\end{document}